\theoremstyle{plain}
\newtheorem{theorem}{Theorem}
\numberwithin{theorem}{section}
\newtheorem{corollary}[theorem]{Corollary}
\theoremstyle{definition}
\newtheorem{remark}[theorem]{Remark}
\newtheorem{example}[theorem]{Example}
\numberwithin{equation}{section}
\newcommand{\Ap}{\operatorname{Ap}}
\newcommand{\Z}{\mathbb{Z}}
\newcommand{\M}{\mathrm{M}}
\newcommand{\N}{\mathbb{N}}
\newcommand{\Q}{\mathbb{Q}}
\newcommand{\mdim}{\operatorname{dim}_{\mathrm{mat}}}
\renewcommand{\pmod}[1]{\,(\operatorname{mod} #1)}
\newcommand{\0}{{\color{lightgray}0}}
\renewcommand{\S}{\mathcal{S}}
\newcommand{\semigroup}[1]{\langle #1 \rangle}
\begin{document}

\title[Numerical semigroups from rational matrices II]{Numerical semigroups from rational matrices II: matricial dimension does not exceed multiplicity}

\author[A.~Chhabra]{Arsh Chhabra}
\address{Department of Mathematics and Statistics, Pomona College, 610 N. College Ave., Claremont, CA 91711, USA}
\email{acaa2021@mymail.pomona.edu}

\author[S.R.~Garcia]{Stephan Ramon Garcia}
\address{Department of Mathematics and Statistics, Pomona College, 610 N. College Ave., Claremont, CA 91711, USA}
\email{stephan.garcia@pomona.edu}
\urladdr{\url{https://stephangarcia.sites.pomona.edu/}}

\author[C.~O'Neill]{Christopher O'Neill}
\address{Mathematics Department, San Diego State University, San Diego, CA 92182}
\email{cdoneill@sdsu.edu}
\urladdr{\url{https://cdoneill.sdsu.edu/}}

\thanks{SRG partially supported by NSF grant DMS-2054002.}

\begin{abstract}
We continue our study of exponent semigroups of rational matrices.  Our main result is that the matricial dimension of a numerical semigroup is at most its multiplicity (the least generator), greatly improving upon the previous upper bound (the conductor).  For many numerical semigroups, including all symmetric numerical semigroups, our upper bound is tight.
\end{abstract}

\keywords{numerical semigroup; rational matrix; conductor}
\subjclass[2000]{20M14,05E40}

\maketitle

\section{Introduction}

Let $\M_d(\cdot)$ denote the set of $d \times d$ matrices with entries in the set $\Z$ of integers or the set $\Q$ of rational numbers, as indicated.
Every (additive) subsemigroup of $\N = \{0,1,2,\ldots\}$ is the \emph{exponent semigroup}
\begin{equation*}
\S(A) = \{ n \in \N : A^n \in \M_d(\Z)\}
\end{equation*}
of some $A \in \M_d(\Q)$ by \cite[Cor.~6.5]{NSRM1}.
In particular, every \emph{numerical semigroup} $S$, that is, a subsemigroup of $\N$ with finite complement \cite{Assi, Rosales}, 
is of the form $S = \S(A)$ with $A \in \M_c(\Q)$, in which $c=c(S) = 1 + \max(\N \backslash S)$ is the \emph{conductor} of $S$ \cite[Thm.~6.2]{NSRM1}.
This ensures that the \emph{matricial dimension}  
\begin{equation*}
\mdim S = \min\{ d \geq 1 : \text{there is an $A \in \M_d(\Q)$ such that $S = \S(A)$}\}
\end{equation*}
of a numerical semigroup $S \subseteq \N$ is well defined and $\mdim S \leq c(S)$.

Each numerical semigroup $S$ has a unique minimal system of \emph{generators}, that is, positive $n_1 <  n_2 < \cdots < n_k$ such that $S = \semigroup{n_1,n_2,\ldots,n_k}$ is the smallest additive subsemigroup of $\N$ containing $n_1, n_2,\ldots, n_k$.  Here, $e(S) = k$ is the \emph{embedding dimension} of $S$ and $m(S) = n_1$ is the \emph{multiplicity} of~$S$.

The main result of this paper is the following dramatic improvement of \cite[Thm.~6.2]{NSRM1}, in which $m(S) = n_1$ replaces $c(S)$.

\begin{theorem}\label{Theorem:Main}
If $S$ is a nontrivial numerical semigroup with multiplicity (minimal generator) $m(S)$, there is an $A \in \M_{m(S)}(\Q)$ such that $\S(A) = S$. 
Thus, $\mdim S \leq m(S)$.
\end{theorem}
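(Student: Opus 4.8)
The plan is to realize $S$ with a single \emph{monomial matrix} (a weighted cyclic shift) whose entries are powers of one fixed prime $p$, reading the exponents directly off the Apéry data of $S$. Write $m = m(S)$ and let $\Ap(S,m) = \{w_0, w_1, \ldots, w_{m-1}\}$, where $w_r$ is the least element of $S$ congruent to $r \pmod m$; set $w_0 = 0$ and $q_r = (w_r - r)/m$, so that $q_1, \ldots, q_{m-1}$ are positive integers (the Kunz coordinates of $S$). The dictionary I will use is that, for $n = km + r$ with $0 \le r < m$, one has $n \in S$ if and only if $k \ge q_r$. I also record the Kunz inequalities $q_a + q_b \ge q_{a+b}$ when $a+b < m$ and $q_a + q_b + 1 \ge q_{a+b-m}$ when $a + b \ge m$ (with $q_0 = 0$), which hold because $w_a + w_b \in S$ lies in the class $a+b \pmod m$ and so dominates the least element of $S$ in that class.

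For the matrix I would take $A \in \M_m(\Q)$ with the entry $p^{s_i}$ in row $i+1$, column $i$ (indices read modulo $m$) for integer exponents $s_0, \ldots, s_{m-1}$ to be chosen, and zeros elsewhere. The point of this shape is that $A^n$ is again a weighted shift by $n$ steps, whose unique nonzero entry in column $i$ has $p$-adic valuation equal to the cyclic window sum $C(i,n) = \sum_{j=0}^{n-1} s_{(i+j) \bmod m}$. Hence $A^n \in \M_m(\Z)$ precisely when $\min_i C(i,n) \ge 0$, and the whole problem reduces to choosing the $s_i$ so that $\min_i C(i,n) \ge 0$ exactly when $n \in S$.

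The exponents I would take are $s_i = q_i - q_{i+1}$ for $0 \le i \le m-2$ and $s_{m-1} = 1 + q_{m-1}$; equivalently, the sequence whose partial sums are $P(i) = -q_i$ for $0 \le i \le m-1$ and $P(m) = 1$, so that the period total is $T = \sum_i s_i = 1$. Extending $P$ by $P(t+m) = P(t)+1$ and writing $n = km + r$, we get $C(i,n) = kT + (P(i+r) - P(i)) = k + (P(i+r) - P(i))$, so the statement to verify becomes $\min_i C(i,n) = k - q_r$, which by the dictionary above is nonnegative exactly when $n \in S$.

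The crux, and the step I expect to be the main obstacle, is the identity $\max_i (P(i) - P(i+r)) = q_r$ for each $r \in \{1, \ldots, m-1\}$. The bound $\le q_r$ is exactly where the Kunz inequalities enter: expanding $P(i) - P(i+r)$ according to whether the window $[i, i+r)$ wraps past $m$ turns the desired bound into $q_{i+r} - q_i \le q_r$ in the no-wrap case and $q_{i+r-m} - q_i - 1 \le q_r$ in the one-wrap case, which are precisely the two Kunz inequalities. The reverse inequality is immediate from the unwrapped window at $i = 0$, where $P(0) - P(r) = q_r$. Combining these gives $\min_i C(i,n) = k - q_r$, hence $\S(A) = S$ and $\mdim S \le m$. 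I would close by observing that $s_0 = -q_1 < 0$ makes $A = A^1$ non-integral, matching $1 \notin S$, and that any prime $p$ (indeed any integer $\ge 2$) works equally well.
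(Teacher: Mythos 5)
Your proposal is correct and is essentially the paper's own construction: your weighted cyclic shift with exponents $s_0=-q_1$, $s_i=q_i-q_{i+1}$, $s_{m-1}=1+q_{m-1}$ is exactly the paper's matrix (up to transposition and an index shift), written in the Kunz-coordinate form the paper itself records in Remark~\ref{r:kunzcoords}, and both arguments hinge on the Kunz inequalities. The only cosmetic difference is that you prove the full formula $\min_i C(i,n)=k-q_r$ for every $n$, whereas the paper checks integrality only at the Ap\'ery elements $a_j$ and at $a_j-m$ and invokes monotonicity in each residue class.
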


While \cite[Thm.~6.2]{NSRM1} was algorithmic, with the bulk of the proof devoted to a proof of correctness, Theorem~\ref{Theorem:Main} gives an explicit construction in terms of Ap\'ery sets~\cite{apery}, a mainstay in the study of numerical semigroups~\cite{aperyunique,wilfconjecture}.  At the core of the proof lies a system of inequalities that reflect the fine structure of Ap\'ery sets; these were first introduced by Kunz in~\cite{kunz87}, and have since been utilized in enumerative~\cite{alhajjar,countingns,SystemInequalNumSemi} and classification~\cite{minprescard,kunzfaces1} problems in this area.

This paper is organized as follows.  Section~\ref{Section:Proof} contains the proof of Theorem~\ref{Theorem:Main}.
Section~\ref{Section:Examples} presents several illustrative examples and Corollary~\ref{Corollary:GeneralSemigroup}, which extends Theorem~\ref{Theorem:Main} to arbitrary semigroups in $\N$.
We close by computing the matricial dimension for irreducible numerical semigroups in Section~\ref{Section:Irreducible}.

\section{Proof of Theorem \ref{Theorem:Main}}\label{Section:Proof}
This section contains the proof of Theorem \ref{Theorem:Main} and some preliminary remarks.
Recall that the \emph{Ap\'ery set} of a numerical semigroup $S$ with multiplicity $m$ is the set
\begin{equation*}
\Ap(S) = \{n \in S : n - m \notin S\}.
\end{equation*}
Each element of $\Ap(S)$ is the smallest element of $S$ in its equivalence class modulo~$m$, so one often writes
\begin{equation*}
\Ap(S) = \{a_0,a_1, \ldots, a_{m-1}\},
\end{equation*}
in which $a_0 = 0$ and each $a_i \equiv i \bmod m$.  It is convenient to interpret the subscripts of the $a_i$ modulo $m$.
For example, it was shown by Kunz in~\cite{kunz87} that
\begin{equation}\label{eq:kunz}
a_i + a_j \geq a_{i+j}
\quad \text{for all} \quad i, j \in \Z
\end{equation}
and in fact this system of inequalities (along with the modular requirements and minimality of $m$) characterize Ap\'ery sets of numerical semigroups.  

Before proceeding to the proof of Theorem \ref{Theorem:Main}, let us first illustrate the matricial structure employed in the proof.  
Suppose $z_0,z_1,z_2,z_3 \neq 0$ and
\begin{equation*}
A = 
\begin{bmatrix}
 \0 & z_1 & \0 & \0 \\
 \0 & \0 & z_2 & \0 \\
 \0 & \0 & \0 & z_3 \\
 z_0 & \0 & \0 & \0 \\
\end{bmatrix}
\in \M_4(\Q),
\end{equation*}
which is a generalized permutation matrix or, equivalently, the adjacency matrix of a weighted directed cycle graph on $4$ vertices.
Observe that
\begin{align*}
A^{4q}
&=
\left[
\begin{smallmatrix}
 (z_0 z_1 z_2 z_3)^{q} & \0 & \0 & \0 \\
 \0 & (z_0 z_1 z_2 z_3)^{q} & \0 & \0 \\
 \0 & \0 & (z_0 z_1 z_2 z_3)^{q} & \0 \\
 \0 & \0 & \0 & (z_0 z_1 z_2 z_3)^{q} \\
\end{smallmatrix}\right],\\[5pt]
 A^{4q+1}
 &=
 \left[\begin{smallmatrix}
 \0 & z_1 (z_0 z_1 z_2 z_3)^q & \0 & \0 \\
 \0 & \0 & z_2 (z_0 z_1 z_2 z_3)^q & \0 \\
 \0 & \0 & \0 & z_3 (z_0 z_1 z_2 z_3)^q \\
 z_0 (z_0 z_1 z_2 z_3)^q & \0 & \0 & \0 \\
\end{smallmatrix}\right],\\[5pt]
A^{4q+2}
&=
\left[\begin{smallmatrix}
 \0 & \0 & z_1 z_2 (z_0 z_1 z_2 z_3)^q & \0 \\
 \0 & \0 & \0 & z_2 z_3 (z_0 z_1 z_2 z_3)^q \\
 z_3 z_0 (z_0 z_1 z_2 z_3)^q & \0 & \0 & \0 \\
 \0 & z_0 z_1 (z_0 z_1 z_2 z_3)^q & \0 & \0 \\
\end{smallmatrix}\right], \quad \text{and}\\[5pt]
 A^{4q+3}
 &=
 \left[\begin{smallmatrix}
 \0 & \0 & \0 & z_1 z_2 z_3 (z_0 z_1 z_2 z_3)^q \\
 z_3 z_0 z_2 (z_0 z_1 z_2 z_3)^q & \0 & \0 & \0 \\
 \0 & z_3 z_0 z_1 (z_0 z_1 z_2 z_3)^q & \0 & \0 \\
 \0 & \0 & z_0 z_1 z_2 (z_0 z_1 z_2 z_3)^q & \0 \\
\end{smallmatrix}\right].
\end{align*}
Writing $p = 4q+r$, with $q, r \in \Z$ and $0 \leq r \leq 3$,
the nonzero entries of $A^p$ are 
\begin{equation}\label{eq:entriesprod}
(z_0 z_1 z_2 z_3)^q \prod_{\ell = 0}^{r - 1} z_{i + \ell}
\quad \text{for}\quad 0 \leq i \leq 3,
\end{equation}
in which the subscripts are interpreted modulo $m$.  

For the proof of Theorem \ref{Theorem:Main},
we generalize \eqref{eq:entriesprod} to the $m \times m$ setting, and let each $z_i$ take the form $b^{x_i}$ with $b \in \Z \backslash\{-1,0,1\}$ and $x_i \in \Z$, so that the multiplicative structure of \eqref{eq:entriesprod} becomes additive.  

\begin{proof}[Proof~of Theorem.~\ref{Theorem:Main}]
Suppose $S$ is a numerical semigroup with multiplicity $m \geq 2$ and Ap\'ery set
$\Ap(S) = \{a_0, a_1, \ldots, a_{m-1}\}$,
in which $a_0 = 0$ and $a_i \equiv i \pmod m$ for each $i$.  
For each $i = 0, 1, \ldots, m-1$, let
\begin{equation*}
x_i = \tfrac{1}{m}(a_{i-1} - a_i + 1),
\end{equation*}
where the subscripts of the $x_i$ are, like those of the $a_i$, interpreted modulo $m$.  
Notice that each $x_i \in \Z$ since $a_{i-1} - a_i + 1 \equiv 0 \pmod m$, and telescoping yields
\begin{align}
x_0+ x_1 + \cdots + x_{m-1}
&= \tfrac{1}{m}(a_{m-1} - a_0 + 1) + \cdots  + \tfrac{1}{m}(a_{m-2} - a_{m-1} + 1) 
\nonumber \\
&= \tfrac{1}{m}(\underbrace{1 + \cdots + 1}_{m \text{ times}})
= 1.
\label{eq:SumOne}
\end{align}

Fix a base $b \in \Z \backslash \{-1,0,1\}$ and let $A \in \M_m(\Q)$ denote the matrix
\begin{equation}\label{eq:FinalMatrix}
A = 
\begin{bmatrix}
\0 & b^{x_1} & \0 & \0 & \color{lightgray}\cdots & \0  \\
\0 & \0 & b^{x_2} & \0 & \color{lightgray}\cdots & \0   \\
\0 & \0 & \0 & b^{x_3} & \color{lightgray}\cdots & \0   \\
\color{lightgray}\vdots & \color{lightgray}\vdots & \color{lightgray}\vdots & \color{lightgray}\vdots & \color{lightgray}\ddots & \color{lightgray}\vdots \\
\0 & \0 & \0 & \0 & \color{lightgray}\cdots & b^{x_{m-1}}  \\
b^{x_0} & \0 & \0 & \0 & \color{lightgray}\cdots & \0  \\
\end{bmatrix}.
\end{equation}
By~\eqref{eq:entriesprod}, for each $p \geq 0$, writing $p = qm + r$ with $q, r \in \Z$ and $0 \le r \le m-1$, the exponent of $b$ in each nonzero entry of $A^p$ has the form
\begin{equation*}
q + \sum_{\ell = 0}^{r-1} \tfrac{1}{m}(a_{i+\ell} - a_{i+\ell+1} + 1)
\quad \text{for some} \quad
i = 0, 1, \ldots, m-1.
\end{equation*}
As such, in order to prove $\S(A) = S$, we must show that the condition
\begin{equation}\label{eq:IntegralCondition}
q + \tfrac{1}{m}(a_i - a_{i+r} + r) \geq 0
\quad \text{for every} \quad
i = 0, 1, \ldots, m-1
\end{equation}
holds if and only if $p \in \S(A)$.  This clearly holds whenever $m \mid p$ since in this case $r = 0$ and $q \geq 0$.  By the definition of the Ap\'ery set, it suffices to prove that for each $j = 1, \ldots, m-1$, we have 
\begin{enumerate}[leftmargin=*]
\item 
$a_j \in \S(A)$ and 

\item 
$a_j - m \notin \S(A)$.  
\end{enumerate}
Indeed, if $p = a_j$, then for each $i = 0, 1, \ldots, m-1$, \eqref{eq:IntegralCondition} becomes
\begin{equation*}
\tfrac{1}{m}(a_j - j) + \tfrac{1}{m}(a_i - a_{i+j} + j)
= \tfrac{1}{m}(a_j + a_i - a_{i+j}) \geq 0,
\end{equation*}
wherein non-negativity follows from~\eqref{eq:kunz}.  Additionally, if $p = a_j - m$, then choosing $i = 0$ we get
\begin{equation*}
\tfrac{1}{m}(a_j - m - j) + \tfrac{1}{m}(a_0 - a_j + j)
= \tfrac{1}{m}(a_0 - m) = -1,
\end{equation*}
so condition~\eqref{eq:IntegralCondition} does not hold.  This completes the proof.  
\end{proof}

\section{Examples and remarks}\label{Section:Examples}

This section contains several remarks and illustrative examples of Theorem~\ref{Theorem:Main} that demonstrate the effectiveness of our main result, along with an extension of Theorem~\ref{Theorem:Main} to arbitrary subsemigroups of $\N$.  We begin with a careful analysis of how things play out for the so-called McNugget semigroup.  

\begin{example}\label{e:mcnugget}
Consider $S = \semigroup{6,9,20}$.  Then $m(S) = 6$ and $c(S) = 44$, so while the construction of \cite[Thm.~6.2]{NSRM1} produces a $B \in \M_{44}(\Q)$ such that $\S(B) = S$.  
In contrast, Theorem~\ref{Theorem:Main} produces an $A \in \M_6(\Q)$ such that $\S(A) = S$.  
Indeed, one can check that
$\Ap(S) = \{0, 49, 20, 9, 40, 29\}$,
so the proof of Theorem~\ref{Theorem:Main} yields
\begin{equation*}
\left[
\begin{array}{r}
x_0 \\
x_1 \\
x_2 \\
x_3 \\
x_4 \\
x_5 \\
\end{array}
\right]
=
\left[
\begin{array}{r}
5 \\
-8 \\
5 \\
2 \\
-5 \\
2 \\
\end{array}
\right]
\quad \text{and} \quad
A=
\begin{bmatrix}
\0 & \frac{1}{256} & \0 & \0 & \0 & \0 \\
\0 & \0 & 32 & \0 & \0 & \0 \\
\0 & \0 & \0 & 4 & \0 & \0 \\
\0 & \0 & \0 & \0 & \frac{1}{32} & \0 \\
\0 & \0 & \0 & \0 & \0 & 4 \\
32 & \0 & \0 & \0 & \0 & \0 \\
\end{bmatrix}.
\end{equation*}
In fact, this establishes $\mdim S = 6$ by Corollary~\ref{Corollary:Symmetric} below.  
\end{example}

\begin{example}
If $S = \semigroup{5, 11}$, then $\Ap(S) = \{0, 11, 22, 33, 44\}$, so Theorem~\ref{Theorem:Main} yields 
\begin{equation*}
A = 
\left[
\begin{smallmatrix}
\0 & b^{-2} & \0 & \0 & \0  \\
\0 & \0 & b^{-2} & \0 & \0   \\
\0 & \0 & \0 & b^{-2} & \0   \\
\0 & \0 & \0 & \0 & b^{-2}  \\
b^{9} & \0 & \0 & \0 & \0  \\
\end{smallmatrix}
\right].
\end{equation*}
More generally, if $S = \semigroup{m, km + 1}$ with $k \in \N$, then $\Ap(S)$ is comprised of integer mulitples of $km + 1$, 
so $a_i = i(km + 1)$ for $i = 1,2,\ldots,m-1$. The only nonzero integral entry of the resulting matrix is in the lower-left corner; the remaining nonzero entries are identical to each another.  
\end{example}

\begin{remark}\label{r:kunzcoords}
The values of $x_0,x_1, \ldots, x_{m-1}$ in the proof of Theorem~\ref{Theorem:Main} can also be expressed in terms of the so-called \emph{Kunz coordinates} $(k_1, k_2,\ldots, k_{m-1})$ of $S$, which are defined so that $a_i = k_im + i$ for each $i = 1,2, \ldots, m-1$ \cite{kunz87}.  In particular, 
\begin{equation*}
x_i = \begin{cases}
-k_1 & \text{if $i = 1$}, \\
k_{i-1} - k_i & \text{if $i = 2,3, \ldots, m-1$}, \\
k_{m-1} + 1 & \text{if $i = 0$}.
\end{cases}
\end{equation*}
Translating~\eqref{eq:kunz} in terms of Kunz coordinates requires the use of cases; this motivates the choice of expression for $x_i$ in the proof of Theorem~\ref{Theorem:Main}.  
\end{remark}

\begin{remark}\label{r:basearbitrary}
The parameter $b \in \Z \backslash\{-1,0,1\}$ in the proof of Theorem \ref{Theorem:Main} is arbitrary.
Laplace (cofactor) expansion of \eqref{eq:FinalMatrix} and equation \eqref{eq:SumOne} ensure that $\det A = (-1)^{m-1}b$,
so $\det A$ is arbitrary in $\Z \backslash\{-1,0,1\}$ and essentially independent of $S$.  
On the other hand, $\det A = \pm 1$ implies that $S$ is cyclic \cite[Thm.~4.2]{NSRM1}.  
Moreover, $\det A = 0$ whenever $A$ is nilpotent, and \cite[Thm.~6.2]{NSRM1} ensures that 
every numerical semigroup is the exponent semigroup of a nilpotent matrix.
\end{remark}

\begin{example}
A small adjustment to the proof of Theorem~\ref{Theorem:Main} permits one to find a representing matrix for any given subsemigroup of $\N$, numerical or not.  Let us consider $S = \semigroup{6,8,10}$.  Since $S = 2T$, in which $T = \semigroup{3,7,11}$ is a numerical semigroup, we have $T = \S(A)$ and $S = \S(B)$ for
\begin{equation*}
A =
\begin{bmatrix}
\0 & 2^{-2} & \0 \\
\0 & \0 & 2^{-1} \\
2^4 & \0 & \0 \\
\end{bmatrix}
\quad \text{and} \quad
B = 
\begin{bmatrix}
\0 & \frac{1}{3} & \0 & \0 & \0 & \0 \\
\0 & \0 & 3 \cdot 2^{-2} & \0 & \0 & \0 \\
\0 & \0 & \0 & \frac{1}{3} & \0 & \0 \\
\0 & \0 & \0 & \0 & 3 \cdot 2^{-1} & \0 \\
\0 & \0 & \0 & \0 & \0 & \frac{1}{3} \\
2^4 & \0 & \0 & \0 & \0 & \0 \\
\end{bmatrix}.
\end{equation*}
We record this observation here.  
\end{example}

\begin{corollary}\label{Corollary:GeneralSemigroup}
If $S \subseteq \N$ is an additive subsemigroup, then $\mdim S \le \min(S \backslash \{0\})$.  
\end{corollary}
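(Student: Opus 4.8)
The plan is to reduce the statement to Theorem~\ref{Theorem:Main} by factoring out the greatest common divisor of $S$. Assume $S \neq \{0\}$ (otherwise the right-hand side is vacuous) and set $d = \gcd(S \setminus \{0\})$. Then $T = \{s/d : s \in S\}$ is a subsemigroup of $\N$ whose nonzero elements have gcd $1$, hence $T$ is a numerical semigroup and $S = dT$. Writing $m = m(T)$, we have $\min(S \setminus \{0\}) = dm$, so it suffices to produce a matrix $B \in \M_{dm}(\Q)$ with $\S(B) = dT$, that is, with $B^p \in \M_{dm}(\Z)$ if and only if $d \mid p$ and $p/d \in T$.

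First I would invoke Theorem~\ref{Theorem:Main} to obtain the $m \times m$ cyclic matrix $A$ for $T$, whose nonzero entries are powers $b^{x_k}$ of a fixed base $b$ with $x_0 + \cdots + x_{m-1} = 1$ by \eqref{eq:SumOne}. The idea is to \emph{dilate} $A$ to a $dm \times dm$ cyclic matrix and to superimpose a second base, coprime to the first, that acts as a divisibility detector. Concretely, fix $c$ coprime to $b$ (both in $\Z \setminus \{-1,0,1\}$) and let $B$ be the $dm \times dm$ generalized permutation matrix of the same cyclic shape as \eqref{eq:FinalMatrix}, with nonzero entries $b^{u_j} c^{v_j}$, where $u_j = x_{j/d}$ when $d \mid j$ and $u_j = 0$ otherwise, and $v_j = \gamma_{j \bmod d}$ for the length-$d$ pattern $\gamma = (1, 1, \ldots, 1, -(d-1))$. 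Because $b$ and $c$ are coprime, a nonzero entry $b^\alpha c^\beta$ of $B^p$ lies in $\Z$ if and only if $\alpha \geq 0$ and $\beta \geq 0$, so the $b$- and $c$-exponents may be analyzed separately.

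By \eqref{eq:entriesprod}, writing $p = (dm)q + R$, each $b$-exponent of $B^p$ equals $q\sum_j u_j + \sum_{\ell=0}^{R-1} u_{i+\ell}$ and each $c$-exponent equals $q\sum_j v_j + \sum_{\ell=0}^{R-1} v_{i+\ell}$, with $i$ ranging over all residues. I would first record that $\sum_j u_j = 1$ and $\sum_j v_j = m\sum_{s=0}^{d-1}\gamma_s = 0$, so the $c$-exponent carries no $q$-term. The $c$-analysis then shows that $\sum_{\ell=0}^{R-1} v_{i+\ell} \geq 0$ for all $i$ precisely when $d \mid R$ (equivalently $d \mid p$): a window whose length is a multiple of $d$ sums to $0$ since $\gamma$ sums to $0$, whereas when $d \nmid R$ some cyclic placement of the trailing length-$(R \bmod d)$ window captures the entry $-(d-1)$ together with only $R\bmod d - 1$ ones and is therefore negative. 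For the $b$-analysis, the interlacing of zeros forces every window of length $R = dr'$ to collect exactly an $A$-window sum $\sum_{k=0}^{r'-1} x_{j+k}$ for some residue $j$ (and conversely), while the quotient $q$ is identical to the one appearing for $A$ at exponent $p/d$; hence the $b$-condition reduces exactly to $p/d \in \S(A) = T$. Combining the two detectors yields $\S(B) = dT = S$, and $B$ has the required size $dm$.

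The main obstacle, and the step I would spend the most care on, is verifying that the divisibility detector $c^{v_j}$ and the dilated $T$-detector $b^{u_j}$ interact cleanly. One must confirm that the $c$-exponents vanish (not merely stay nonnegative) on exactly the windows with $d \mid R$, so that they never spuriously obstruct integrality when $d \mid p$, and simultaneously that the zero-interlacing of the $u_j$ makes the minimum over all starting residues $i$ of the length-$dr'$ window coincide with the minimal $A$-window of length $r'$. Getting the bookkeeping of residues and window lengths right — in particular checking that the same quotient $q$ governs $B^p$ and $A^{p/d}$ — is the crux; once that correspondence is established, nonnegativity follows from \eqref{eq:kunz} exactly as in the proof of Theorem~\ref{Theorem:Main}.
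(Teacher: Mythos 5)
Your proposal is correct and follows essentially the same route as the paper: write $S = dT$ for a numerical semigroup $T$, apply Theorem~\ref{Theorem:Main} to $T$, dilate the resulting cyclic matrix to size $dm$, and superimpose a second coprime base whose $d$-periodic, zero-sum exponent pattern detects divisibility of the power by $d$. The only difference is cosmetic: the paper takes bases $2$ and $3$ with the pattern $(d-1,-1,\ldots,-1)$ (entries $3^{d-1}2^{x_k}$ at positions $kd$ and $\tfrac13$ elsewhere), while you use general coprime $b,c$ with the equivalent pattern $(1,\ldots,1,-(d-1))$.
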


\begin{proof}
Suppose $S = dT$ where $T$ is a numerical semigroup and $d \in \N$ is positive.  Let $m = m(T)$.  As in the proof of Theorem~\ref{Theorem:Main}, fix $x_0, x_1, \ldots, x_{m-1} \in \Z$ such that 
\begin{equation*}
A = 
\begin{bmatrix}
\0 & 2^{x_1} & \color{lightgray}\cdots & \0  \\
\color{lightgray}\vdots & \color{lightgray}\vdots & \color{lightgray}\ddots & \color{lightgray}\vdots \\
\0 & \0 & \color{lightgray}\cdots & 2^{x_{m-1}}  \\
2^{x_0} & \0 & \color{lightgray}\cdots & \0  \\
\end{bmatrix}
\end{equation*}
has $\S(A) = T$.  Let $B \in M_{dm}(\Q)$ be the matrix 
\begin{equation*}
B = 
\begin{bmatrix}
\0 & z_1 & \color{lightgray}\cdots & \0  \\
\color{lightgray}\vdots & \color{lightgray}\vdots & \color{lightgray}\ddots & \color{lightgray}\vdots \\
\0 & \0 & \color{lightgray}\cdots & z_{dm-1}  \\
z_0 & \0 & \color{lightgray}\cdots & \0  \\
\end{bmatrix}
\end{equation*}
defined by
\begin{equation*}
z_i = \begin{cases}
3^{d-1} 2^{x_k} & \text{if $i = kd$ with $k \in \Z$}, \\
\tfrac{1}{3} & \text{otherwise.}
\end{cases}
\end{equation*}
By \eqref{eq:entriesprod}, for any $p \in \N$ the nonzero entries in $B^{pd}$ are precisely those that appear in $A^p$, so $pd \in \S(B)$ if and only if $p \in \S(A)$.  On the other hand, any power of $B$ not divisible by $d$ has at least one non-integer entry with a power of 3 in the denominator, so $\gcd(\S(B)) = d$.  As such, we conclude $\S(B) = S$.  
\end{proof}

\section{Irreducible numerical semigroups}\label{Section:Irreducible}

Fix a numerical semigroup $S$, and let $F = c(S) - 1$.  
Recall that:
\begin{enumerate}[leftmargin=*]
\item 
$S$ is \emph{symmetric} if $x \in \Z \backslash S$ implies $F - x \in S$;

\item 
$S$ is \emph{pseudosymmetric} if $F$ is even and $x \in \Z \backslash S$ implies $F - x \in S$ or $x = F/2$; and

\item 
$S$ is \emph{irreducible} if $S$ is it cannot be written as an intersection of finitely many numerical semigroups properly containing it.  
\end{enumerate}

A numerical semigroup is irreducible if and only if it is symmetric or pseudosymmetric~~\cite{irreduciblecharacterization}, and these two families of numerical semigroups are each of interest in commutative algebraic settings (see~\cite{nsgorenstein} and~\cite{algebraicpseudosymmetric}, respecitvely).  

\begin{remark}\label{r:intersection}
Every numerical semigroup can be written as an intersection of finitely many irreducible numerical semigroups, and such expressions are often far from unique \cite{bogart2024unboundedness,nsirreducibledecomp}.  In some cases, one can use this fact and~\cite[Thm.~2.3(a)]{NSRM1} to obtain a more optimal construction than Theorem~\ref{Theorem:Main}.  
For example, Theorem~\ref{Theorem:Main} identifies $A \in \M_{15}(\Q)$ with exponent semigroup $\S(A) = \semigroup{15, 20, 21, 25, 26}$, but
\begin{equation*}
\S(A) = T \cap T'
\quad \text{with} \quad
T = \semigroup{3, 11}
\quad \text{and} \quad
T' = \semigroup{5, 16},
\end{equation*}
so one can obtain a block-diagonal matrix $A' \in \M_8(\Q)$ with $\S(A') = \S(A)$ by applying Theorem~\ref{Theorem:Main} to $T$ and $T'$.   Note that this strategy would be ineffective with the construction in~\cite[Thm.~6.2]{NSRM1} since $c(T \cap T') = \max(c(T), c(T'))$, while $m(T \cap T')$ can be much larger than $m(T) + m(T')$.  
\end{remark}

Remark~\ref{r:intersection} does not aid in obtaining the matricial dimension of irreducible numerical semigroups, since they cannot be written as an intersection of finitely many other numerical semigroups.  Luckily, Theorem~\ref{Theorem:Main} and~\cite{NSRM1} together identify the matricial dimension of nearly all such semigroups.  We record this here.  

\begin{corollary}\label{Corollary:Symmetric}
If $S$ is a symmetric numerical semigroup, then $\mdim S = m(S)$.  
In particular, if $e(S) = 2$, then $\mdim S = m(S)$.  
\end{corollary}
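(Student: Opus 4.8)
The inequality $\mdim S \le m(S)$ is precisely Theorem~\ref{Theorem:Main}, so the entire content of the corollary is the reverse bound $\mdim S \ge m(S)$, and this is where the results of \cite{NSRM1} must enter: Theorem~\ref{Theorem:Main} supplies no lower bounds whatsoever. The plan is therefore to extract from \cite{NSRM1} the lower-bound estimate for $\mdim$ and to show that, for symmetric $S$, this estimate already forces $d \ge m(S)$ for every $A \in \M_d(\Q)$ with $\S(A) = S$. Combined with the upper bound, this yields the claimed equality.

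I would organize the argument around the standard reformulations of symmetry. Writing $F = c(S) - 1$, a numerical semigroup is symmetric exactly when it has type one, equivalently when its Ap\'ery set $\Ap(S) = \{a_0, a_1, \ldots, a_{m-1}\}$ admits the involution $a_i \mapsto (F + m) - a_i$, which permutes $\Ap(S)$ and has $F + m = \max \Ap(S)$ as its unique maximal element under the partial order $a \preceq a'$ given by $a' - a \in S$. The first step is to recast the lower-bound invariant of \cite{NSRM1} in terms of $\Ap(S)$; the second is to feed in this involution, using it to rule out any representation of $S$ by the generalized permutation pattern of \eqref{eq:FinalMatrix}, or by a more general matrix, on fewer than $m$ states. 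The $e(S) = 2$ assertion then follows at once, since $S = \semigroup{n_1, n_2}$ with $\gcd(n_1, n_2) = 1$ is always symmetric and has $m(S) = n_1$; this case also recovers the two-generated computation already available in \cite{NSRM1}.

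The main obstacle is the lower bound itself. Since Theorem~\ref{Theorem:Main} hands us the upper bound for free, all of the difficulty lies in certifying that symmetry obstructs every representation of $S$ by a matrix smaller than $m(S) \times m(S)$. I expect the delicate point to be the passage from the purely combinatorial symmetry of the Ap\'ery set to the matricial constraint furnished by \cite{NSRM1}, and in particular in understanding why the argument is sensitive to the symmetric hypothesis rather than to the broader notion of irreducibility, which is exactly why only symmetric (and not all irreducible) semigroups are resolved here.
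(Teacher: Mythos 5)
You have correctly located where all the content lies (the upper bound is Theorem~\ref{Theorem:Main}, the lower bound must come from \cite{NSRM1}, and $e(S)=2$ reduces to the symmetric case since every two-generated numerical semigroup is symmetric), and this matches the skeleton of the paper's argument. But there is a genuine gap: you never actually establish the lower bound. The paper's proof is a direct citation --- $\mdim S \geq m(S)$ for symmetric $S$ is precisely \cite[Thm.~5.3]{NSRM1}, a theorem already proved in the companion paper --- so nothing needs to be re-derived. Your proposal instead announces a plan to re-prove it via the Ap\'ery-set involution $a_i \mapsto (F+m) - a_i$, and that plan is left entirely unexecuted: you say you would ``recast the lower-bound invariant of \cite{NSRM1} in terms of $\Ap(S)$'' and ``feed in this involution,'' but no such invariant is identified and no mechanism is given by which the involution obstructs a representation. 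Your own closing paragraph concedes that ``all of the difficulty lies in certifying'' the lower bound, which is an admission that the proof is missing rather than a proof.

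Moreover, the one concrete route you do gesture at would fail as stated: you propose to ``rule out any representation of $S$ by the generalized permutation pattern of \eqref{eq:FinalMatrix}, or by a more general matrix, on fewer than $m$ states.'' The quantity $\mdim S$ minimizes over \emph{all} of $\M_d(\Q)$, so excluding weighted-cycle matrices (or any structured family) proves nothing; the lower bound must obstruct arbitrary rational matrices. The tools the paper actually uses for lower bounds against arbitrary matrices are of a different nature --- e.g.\ \cite[Thm.~5.1]{NSRM1}, which shows that $d$ consecutive elements of $\S(A)$ for $A \in \M_d(\Q)$ force all subsequent integers into $\S(A)$, is how the paper bounds $\mdim$ from below in its later examples. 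If you wanted a self-contained argument rather than the citation, something of that analytic/spectral flavor is what would be required; the combinatorial symmetry of $\Ap(S)$ alone, with no bridge to matrix powers, does not yield it. As written, the corollary's equality is asserted but only half proved.
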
 

\begin{proof}
This follows from Theorem \ref{Theorem:Main} since $\mdim S \geq m(S)$ by \cite[Thm.~5.3]{NSRM1}.  Additionally, $e(S) = 2$ implies $S$ is symmetric by \cite[Cor.~4.7]{Rosales}.
\end{proof}

\begin{corollary}\label{Corollary:Pseudosymmetric}
Let $S$ be a nontrivial pseudosymmetric numerical semigroup.  
\begin{enumerate}[leftmargin=*]
\item If $c(S) \le m(S)$, then $\mdim S = 2$.
\item If $m(S) < c(S) \le 2m(S)$, then $m(S) - 1 \leq \mdim S\leq m(S)$.
\item If $c(S) > 2m(S)$, then $\mdim S = m(S)$.
\end{enumerate}
\end{corollary}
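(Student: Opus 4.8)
The plan is to combine the upper bound $\mdim S \le m(S)$ from Theorem~\ref{Theorem:Main} with the lower-bound machinery of \cite[Thm.~5.3]{NSRM1} (the same ingredient that powers Corollary~\ref{Corollary:Symmetric}), the organizing principle being that a pseudosymmetric semigroup is ``symmetric up to one gap.'' Writing $F = c(S) - 1$ for the (even) Frobenius number and $m = m(S)$, the gaps of $S$ pair off as $x \leftrightarrow F - x$ with exactly one member of each pair in $S$, the sole exception being the self-paired gap $F/2$. The governing dichotomy is whether this lone gap lies below or above the multiplicity: $F/2 < m \iff c \le 2m$, while $F/2 \ge m \iff c > 2m$. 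I expect the three cases to correspond precisely to $F/2$ being forced far below $m$ (collapsing $S$ to $\semigroup{3,4,5}$), below $m$, and at least $m$.

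I would dispatch (a) first by a structural reduction. Since $c(S) \in S$, we always have $c \ge m$, so $c \le m$ forces $c = m$ and hence $S = \{0\} \cup \{m, m+1, \dots\}$. Pseudosymmetry then forces $m = 3$: the gaps are $\{1,\dots,m-1\}$, and for a gap $x \ne F/2$ one needs $F - x = m-1-x \in S$, which (as $m-1-x < m$) occurs only for $x = m-1$; all remaining gaps would have to equal $F/2 = (m-1)/2$, possible only when $m \le 3$. Thus (a) concerns the single semigroup $\semigroup{3,4,5}$, for which $\mdim S \ge 2$ since $S \ne \N$. For the matching upper bound I would exhibit an explicit $A \in \M_2(\Q)$: writing $A^n = p_n A + q_n I$ via Cayley--Hamilton, the choices $\tr A, \det A \in \Z$ give $p_n, q_n \in \Z$, so $A^n \in \M_2(\Z)$ iff the common denominator of $A$ divides $p_n$. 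Taking $\tr A = 2$, $\det A = 4$, and an entry of denominator $8$ yields $p_n \equiv 0,1,2,0,0,0,\dots \pmod 8$, whence $\S(A) = \{0\}\cup\{3,4,\dots\} = \semigroup{3,4,5}$ and $\mdim S = 2$.

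For (b) and (c) the upper bound $\mdim S \le m$ is immediate from Theorem~\ref{Theorem:Main}, so everything rests on the lower bound. Here I would feed $S$ into the lower-bound argument of \cite[Thm.~5.3]{NSRM1}, which returns $m$ in the symmetric case, and track how the single defect at $F/2$ degrades that count. When $c > 2m$ the lone gap satisfies $F/2 \ge m$, so it does not disturb the below-$m$ gap data that the argument consumes, and the full bound $\mdim S \ge m$ survives, giving $\mdim S = m$ in case (c). When $m < c \le 2m$ the lone gap satisfies $F/2 < m$ and interferes with exactly one of those slots, costing a single dimension and yielding $\mdim S \ge m-1$; together with the upper bound this produces the stated range $m-1 \le \mdim S \le m$ in case (b).

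The main obstacle is the lower bound in (b)--(c): making rigorous the claim that the pseudosymmetric defect at $F/2$ changes the \cite[Thm.~5.3]{NSRM1} count by exactly one dimension when $F/2 < m$ and by none when $F/2 \ge m$. This requires unwinding how that bound depends on the below-$m$ structure of the Apéry set and verifying that a single unpaired gap can be absorbed at the cost of at most one row and column. This is precisely why case (b) is only a one-sided (range) statement rather than an equality: the defect \emph{may} cost a dimension, but need not, and which occurs is not determined by $c$ and $m$ alone.
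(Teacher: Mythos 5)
The heart of this corollary is the pair of lower bounds in parts (b) and (c), and that is precisely where your argument stops being a proof. The paper disposes of all three cases in one line by combining Theorem~\ref{Theorem:Main} (the upper bound) with the inequalities of \cite[Thm.~5.6]{NSRM1}, a result stated specifically for pseudosymmetric semigroups. You instead cite \cite[Thm.~5.3]{NSRM1}, whose \emph{statement} (symmetric implies $\mdim S \ge m(S)$) does not apply to a pseudosymmetric $S$, and you propose to rerun its internal argument while ``tracking how the single defect at $F/2$ degrades that count.'' You never carry this out, and you concede as much in your final paragraph. A plan to modify a proof you have not examined is not a proof: as written, the claims that ``the full bound survives'' in case (c) and that the defect ``costs a single dimension'' in case (b) are assertions, not deductions, so the bounds $\mdim S \ge m$ and $\mdim S \ge m-1$ are unsupported. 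This is a genuine gap.

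For what it is worth, the gap is fillable along lines close to your heuristic, but the correct tool is \cite[Thm.~5.1]{NSRM1} (if $A \in \M_d(\Q)$ and $\S(A)$ contains $d$ consecutive integers, it contains all larger ones), which this paper uses in its final examples. In case (c), $F/2 \ge m$, so pseudosymmetry places $F-1, F-2, \ldots, F-m+1$ in $S$: a run of $m-1$ consecutive elements immediately below the gap $F$, whence $\mdim S \ge m$. In case (b) you need a structural fact you never prove: pseudosymmetry together with $m < c \le 2m$ forces $F = 2m-2$ exactly, since if $F/2 \le m-2$ then $j = F/2 + 1$ is a gap below $m$ distinct from $F/2$, so $F - j = F/2 - 1$ would be a positive element of $S$ smaller than $m$, a contradiction. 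Consequently $S = \{0, m, m+1, \ldots, 2m-3\} \cup \{2m-1, 2m, \ldots\}$, the run $m, \ldots, 2m-3$ of length $m-2$ sits below the gap $2m-2$, and \cite[Thm.~5.1]{NSRM1} gives $\mdim S \ge m-1$. Note that this also corrects your closing remark: in case (b) the semigroup is \emph{uniquely determined} by $m$, so it is not that ``which occurs is not determined by $c$ and $m$ alone''; rather, the matricial dimension of this one semigroup is simply unknown (the paper conjectures it is $m-1$). Your case (a), by contrast, is correct and self-contained: the reduction to $\semigroup{3,4,5}$ is valid, and your $2\times 2$ matrix with trace $2$, determinant $4$, and an entry of denominator $8$ does satisfy $\S(A) = \semigroup{3,4,5}$, paralleling the paper's explicit matrix for $\semigroup{3,5,7}$ — but that is the easy third of the statement.
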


\begin{proof}
Combine Theorem \ref{Theorem:Main} with the inequalities in \cite[Thm.~5.6]{NSRM1}.
\end{proof}

\begin{example}
The semigroup $S = \semigroup{3,5,7}$ is pseudosymmetric with $m(S) = 3$ and $c(S) = 5$.
One can readily check that $\S(A) = S$ for 
\begin{equation*}
A = \begin{bmatrix}
1 & -\tfrac{3}{16} \\[0.1em]
16 & \phantom{-}1
\end{bmatrix},
\end{equation*}
so $\mdim S = 2 = m(S) - 1$.  We conjecture that $\mdim S = m(S) - 1$ whenever $S$ is pseudosymmetric and $m(S) < c(S) \le 2m(S)$.  
\end{example}

\begin{example}
Consider $S = \semigroup{7,54,66}$, which has $m(S) = 7$, $c(S) = 192$, and 
\begin{equation*}
\Ap(S) = \{0, 120, 198, 66, 186, 54, 132\}.
\end{equation*}
Since $S$ is neither symmetric or pseudosymmetric, the previous corollaries do not determine $\mdim S$.  Theorem \ref{Theorem:Main} ensures that $\mdim S \leq 7$ whereas \cite[Thm.~6.2]{NSRM1} provides the much weaker bound $\mdim S \leq 192$.  However, we can prove that $\mdim S = 7$ as follows.
Suppose toward a contradiction that $S = \S(A)$, in which $A \in \M_d(\Q)$ with $1 \leq d \leq 6$.   
Then $185,186,187,188,189,190 \in S$ ensures that $\S(A)$ contains all successive natural numbers \cite[Thm.~5.1]{NSRM1}.
This contradicts the fact that $c(S) - 1 = 191 \notin S$.  Therefore, $\mdim S \geq 7$, so $\mdim S = 7$.
\end{example}

\begin{example}
These methods are insufficient to compute the matricial dimension of all numerical semigroups.  For example, 
the longest string of consecutive elements in $S = \semigroup{39, 40, 47}$ below $c(S) = 390$ is $\{351,352,\ldots,381\}$, which has length $31$.
Thus, $32 \leq \mdim S \leq 39$ by \cite[Thm.~5.1]{NSRM1} and Theorem~\ref{Theorem:Main}, respectively.  Since $S$ is neither symmetric nor pseudosymmetric, we cannot appeal to the corollaries above.
\end{example}

\bibliography{NSRM2}
\bibliographystyle{amsplain}

\end{document}